\newcommand{\N}{\ensuremath{\mathbb{N}}}
\newcommand{\R}{\ensuremath{\mathbb{R}}}
\renewcommand{\.}{{\cdot}}
\newcommand{\D}{\mathrm{d}}
\theoremstyle{plain}
\newtheorem{Thm}{Theorem}[]
\newtheorem{Lem}{Lemma}[]
\newtheorem{corollary}{Corollary}[]
\theoremstyle{remark}
\newtheorem*{remark}{Remark}
\theoremstyle{definition}
\newtheorem{Def}{Definition}[]
\newtheorem{example}{Example}[]
\begin{document}

\author[]{A. Dirmeier}
\address{Department of Mathematics, Technische Universit{\"a}t Berlin,
Str.~d.~17.~Juni 136, 10623 Berlin, Germany}

\title[]{Growth conditions for conformal transformations preserving Riemannian completeness}

\begin{abstract}
For a complete Riemannian metric, a pointwise conformal transformation may lead to a complete or incomplete transformed Riemannian metric, depending on the behavior of the conformal factor. We establish conditions on the growth of the conformal factor towards the infinity of the Riemannian manifold, such that the conformally transformed Riemannian metric remains complete. 
\end{abstract}

\maketitle


In 1961 Nomizu and Ozeki \cite{Nomizu1961} established the result that every manifold $M$, satisfying the second axiom of countability, admits complete and incomplete Riemannian metrics. Moreover these are connected by conformal transformations. Hence, for every incomplete Riemannian metric, we can find a conformal factor to make it complete and vice versa. Subsequently, it has been established (cf.~\cite{Fegan1978} and \cite{Morrow1970}) that the complete and the incomplete Riemannian metrics are dense in the space of Riemannian metrics over the manifold $M$. Based on \cite{Fegan1978}, it possible to establish a partial order for Riemannian metrics $g,h$ on $M$ by 
\[
 g\leq h\ :\Leftrightarrow\ g_x(v,v)\leq h_x(v,v)
\]
for all $x\in M$ and $v\in T_xM$. Now if $g$ is complete and $g\leq h$, then $h$ is complete. As usual, we will call a Riemannian metric $g$ on $M$ or the Riemannian manifold $(M,g)$ complete if $M$ is complete with respect to the distance function $d_g(\.,\.)$ associated to $g$. The aim of the present paper is to establish conditions on a conformal factor $A\colon M\to(0,\infty)$, which transforms a given complete Riemannian metric $g$ to $g^{\prime}=\frac{g}{A^2}$, such that $g^{\prime}$ is still complete. Following \cite{Nomizu1961}, a conformal factor which makes $g^{\prime}$ incomplete is easy to find. Take for example exponential growth of $A$ towards $g$-infinity, then $g^{\prime}$ is incomplete (if $M$ is non-compact). But the exact upper bound on the growth of $A$, where $g^{\prime}$ changes from complete to incomplete has hitherto not been established. This is the content of Thm.~\ref{theorem} below.

The author is indebted to Prof.~P.~B\'erard from the Universit\'e de Grenoble for pointing out an error in an earlier version of this paper, as well as in \cite{Dirmeier2010}, as well as to Prof.~E.~Caponio from the Politecnico di Bari for a fruitful discussion on the corrections to \cite{Dirmeier2010}, which also yielded some corrections of this paper. 

We will start with some definitions. All functions are assumed to be smooth, considering their use as conformal factors below, although for Def.~\ref{definition} and Lem.~\ref{lem1} measurable functions suffice. Furthermore, we assume all curves to be regular.

Let $M$ be a manifold. We will assume all manifolds to be non-compact. A ray $\alpha\colon[0,b)\to M$ (or a curve $\tilde{\alpha}\colon (a,b)\to M$ for some $a<0<b$ such that $\alpha=\tilde{\alpha}|_{[0,b)}$ is a ray) with $0<b\leq\infty$ will be called \textit{escaping (to infinity)} on $M$ if there is a sequence $\{t_n\}_{n\in\N}\subset[0,b)$ with $t_n\to b$ as $n\to\infty$, such that $\alpha(t_n)\to\infty$ as $n\to\infty$ in the following sense: there is an exhaustion of $M$ by compact sets $\{K_n\}_{n\in\N_0}\subset\mathcal{P}(M)$ (the power set of $M$), $M=\bigcup_{n\in\N_0}K_n$, $K_0=\{\alpha(0)\}$, $K_{n}\subset\mathring{K}_{n+1}$ for all $n\in\N_0$, such that $\alpha(t_n)\in K_n\setminus K_{n-1}$ for all $n\in\N$. 

\begin{Def}\label{definition}
Let $(M,g)$ be a complete Riemannian manifold.
\begin{itemize}
 \item[(i)] A function $f\colon M\to(0,\infty)$ \textit{grows at most linearly (sublinearly) towards $g$-infinity} on $(M,g)$ if for all fixed $x_0\in M$ there are constants $c_1,c_2>0$, such that for all $x\in M$, $f(x)\leq c_1d_g(x_0,x)+c_2$ holds.
 \item[(ii)] A function $f\colon M\to(0,\infty)$ \textit{grows superlinearly towards $g$-infinity} on $(M,g)$ if there are constants $\epsilon,c_1,c_2>0$, such that for all fixed $x_0\in M$ and all $x\in M$, $f(x)\geq c_1d_g(x_0,x)^{1+\epsilon}+c_2$ holds.
 \item[(iii)] A function $f\colon M\to(0,\infty)$ will be called \textit{$L^1$ on an escaping curve w.r.t.~$g$} if there is $x_0\in M$ and an escaping ray $\gamma\colon[0,\infty)\to M$ with $\gamma(0)=x_0$, such that \[\int_0^{\infty}(f\circ\gamma)(s)\sqrt{g_{\gamma(s)}(\dot{\gamma}(s),\dot{\gamma}(s))}\D s<\infty.\] Obviously, this condition is independent of the parametrization of $\gamma$, and without loss of generality, we can assume $\gamma$ to be parametrized by arc length, such that $g(\dot{\gamma},\dot{\gamma})=1$, and we have $\int_0^{\infty}(f\circ\gamma)(s)\D s<\infty$, i.e., $f\circ\gamma\in L^1([0,\infty))$. 
\end{itemize}
\end{Def}

Inspecting item (iii), we observe that the condition is equivalent to state that the length of the escaping ray in the conformally transformed metric $f^2g$ on $M$ is finite. As it is known that a Riemannian metric is complete if and only if the length of any escaping curve is infinite, the connection to completeness becomes obvious. But stating the condition in terms of growth of the conformal factor, instead of in terms of conformally transformed curve lengths, allows to compare this condition to the linear growth conditions in items (i) and (ii). The precise relation is clarified in the following

\begin{Lem}\label{lem1}
If $f\colon M\to(0,\infty)$ grows superlinearly towards $g$-infinity on a complete Riemannian manifold $(M,g)$, then $\frac{1}{f}\colon M\to(0,\infty)$ is $L^1$ on \emph{all} escaping $g$-geodesic rays. If $f\colon M\to(0,\infty)$ grows at most linearly towards $g$-infinity on a complete Riemannian manifold $(M,g)$, then $\frac{1}{f}\colon M\to(0,\infty)$ is \emph{not} $L^1$ on any escaping curve w.r.t.~$g$.    
\end{Lem}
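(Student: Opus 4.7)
The plan is to treat the two implications separately. The at-most-linear half is a direct Lipschitz argument valid for any escaping curve, whereas the superlinear half invokes the geodesic hypothesis in an essential way and requires a quantitative lower bound on $d_g(x_0,\gamma(s))$.

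For the at-most-linear direction, fix an escaping curve $\alpha$ and reparametrize by $g$-arc length with $\alpha(0)=x_0$, so that $\sqrt{g_{\alpha(s)}(\dot\alpha(s),\dot\alpha(s))}=1$. Because $\alpha|_{[0,s]}$ has $g$-length $s$ and joins $x_0$ to $\alpha(s)$, one has $d_g(x_0,\alpha(s))\leq s$, and the at-most-linear bound with basepoint $x_0$ yields $f(\alpha(s))\leq c_1 s+c_2$, whence
\[
\int_0^\infty \frac{\D s}{f(\alpha(s))}\;\geq\;\int_0^\infty\frac{\D s}{c_1 s+c_2}\;=\;+\infty,
\]
proving that $1/f$ fails to be $L^1$ along $\alpha$. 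Since $\alpha$ was arbitrary, $1/f$ is not $L^1$ on any escaping curve.

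For the superlinear direction, take an arc-length parametrized escaping $g$-geodesic ray $\gamma\colon[0,\infty)\to M$ with $\gamma(0)=x_0$. Hopf--Rinow gives compactness of the closed $g$-balls $\bar B_R(x_0)$, and a preliminary step is to use this, together with the escaping condition and the fact that $\gamma$ has unit speed, to conclude that $d_g(x_0,\gamma(s))\to\infty$ unreservedly. The superlinear hypothesis then yields the pointwise estimate
\[
\frac{1}{f(\gamma(s))}\;\leq\;\frac{1}{c_1 d_g(x_0,\gamma(s))^{1+\epsilon}+c_2}.
\]
The decisive further step is to promote this divergence to a linear lower bound $d_g(x_0,\gamma(s))\geq c\,s$ for some $c>0$ and all $s\geq s_0$. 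On the minimizing segment $[0,t_c]$ up to the cut time of $\gamma$ from $x_0$ one has the exact equality $d_g(x_0,\gamma(s))=s$; for $s>t_c$ one combines unit speed of $\gamma$ with compactness of closed balls to exclude sublinear lagging. Once the linear bound is in hand,
\[
\int_{s_0}^\infty\frac{\D s}{f(\gamma(s))}\;\leq\;\int_{s_0}^\infty\frac{\D s}{c_1 c^{1+\epsilon}s^{1+\epsilon}+c_2}\;<\;+\infty,
\]
while the integral over $[0,s_0]$ is finite by continuity and positivity of $f$, so $1/f$ is $L^1$ along $\gamma$.

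The main obstacle I anticipate is precisely the linear lower bound on $d_g(x_0,\gamma(s))$. The Lipschitz upper estimate $d_g(x_0,\gamma(s))\leq s$ is automatic for any unit-speed curve and does the entire work in the at-most-linear half; but mere divergence $d_g(x_0,\gamma(s))\to\infty$ cannot suffice in the superlinear half, since for instance a logarithmic rate would already make $\int\D s/(\log s)^{1+\epsilon}$ diverge. Conversely, a general escaping curve may oscillate inside a bounded annulus for unbounded arc length, which is exactly why the statement restricts to geodesic rays in the first half while allowing arbitrary curves in the second. Extracting the linear rate from the geodesic assumption on a general complete Riemannian manifold is the step I expect will require the most care; the remainder of the argument reduces to routine comparison with $\D s/(s+1)$ and $\D s/s^{1+\epsilon}$.
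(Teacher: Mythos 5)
Your at-most-linear half coincides with the paper's argument (unit-speed reparametrization, $d_g(x_0,\alpha(s))\leq s$, comparison with $\int_0^\infty \D s/(c_1s+c_2)=\infty$) and is fine. The superlinear half, however, has a genuine gap, and it sits exactly at the step you yourself flag: the linear lower bound $d_g(x_0,\gamma(s))\geq c\,s$ is never proved, and in the generality in which you pose it --- an arbitrary unit-speed geodesic defined on $[0,\infty)$ that escapes in the sense of the paper --- it is false, so no amount of care will extract it. On a complete surface of revolution $\R\times S^1$ with metric $\D t^2+r(t)^2\D\varphi^2$ one can place a sequence of necks at heights $n$ whose radii dip to $c(1+\delta_n)$ with $\delta_n\to0$ rapidly; by the Clairaut relation a geodesic with Clairaut constant $c$ still crosses every neck, but its height advances at rate $\sqrt{1-c^2/r^2}\approx\sqrt{2\delta_n}$ there, so it spends arc length of order $\delta_n^{-1/2}$ gaining a bounded amount of distance. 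Choosing $\delta_n$ suitably gives an escaping geodesic with, say, logarithmic distance growth, for which $s\mapsto 1/(c_1 d_g(x_0,\gamma(s))^{1+\epsilon}+c_2)$ is not integrable; taking $f$ comparable to $1+d_g(x_0,\cdot)^{1+\epsilon}$ then defeats the conclusion itself, not just your method. Your preliminary claim that compactness of closed balls forces $d_g(x_0,\gamma(s))\to\infty$ ``unreservedly'' is also unjustified: escaping, as defined in the paper, only yields divergence along a sequence of parameters, and complete manifolds admit geodesics that make unbounded excursions while returning infinitely often to a fixed compact set. The cut-time observation does not help either, since the cut time may be finite and small.

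The paper's own proof is immediate precisely because it does not attempt any such rate extraction: it uses the equality $d_g(x_0,\gamma(s))=s$, i.e.\ it reads ``escaping $g$-geodesic ray'' as a ray in the standard Riemannian sense, a unit-speed geodesic minimizing between any two of its points, whereupon the superlinear hypothesis gives $1/f(\gamma(s))\leq 1/(c_1 s^{1+\epsilon}+c_2)\in L^1([0,\infty))$ at once. That reading is also all that Theorem 1 requires, since incompleteness of $g'=g/A^2$ needs only \emph{one} escaping curve along which $1/A$ is integrable, and a complete non-compact manifold carries a minimizing geodesic ray from every point. So the repair for your write-up is to restrict the first assertion to minimizing rays (where $d_g(x_0,\gamma(s))=s$ replaces your conjectured linear bound), rather than to try to prove a linear distance rate for arbitrary escaping geodesics.
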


\begin{proof}
Let $x_0\in M$ be a fixed point and $\gamma\colon[0,\infty)\to M$ any escaping $g$-geodesic ray with $\gamma(0)=x_0$, parametrized by arc length. Assume $f$ to grow superlinearly towards $g$-infinity, thus there are constants $\epsilon,c_1,c_2>0$ such that
\[
 \frac{1}{f(x)}\leq\frac{1}{c_1d_g(x_0,x)^{1+\epsilon}+c_2},
\]
for all $x_0,x\in M$. Hence, we have for all $s\in[0,\infty)$
\[
 \frac{1}{(f\circ\gamma)(s)}\leq\frac{1}{c_1d_g(x_0,\gamma(s))^{1+\epsilon}+c_2}=\frac{1}{c_1 s^{1+\epsilon}+c_2}\in L^1([0,\infty))
\]
as $\epsilon,c_1,c_2>0$.

Assume now that $f$ grows at most linearly towards $g$-infinity on $M$. Hence, for all $x_0\in M$, there are constants $c_1,c_2>0$ such that
\[
 \frac{1}{f(x)}\geq\frac{1}{c_1d_g(x_0,x)+c_2},
\]
for all $x\in M$. Let $\gamma\colon[0,\infty)\to M$ be any escaping ray emanating at $x_0\in M$ and parametrized by arc length. Then we have $d_g(x_0,\gamma(s))\leq s$ for all $s\in[0,\infty)$ and thus
\[
 \frac{1}{(f\circ\gamma)(s)}\geq\frac{1}{c_1d_g(x_0,\gamma(s))+c_2}\geq\frac{1}{c_1 s+c_2}.
\]
This implies $\int_{[0,\infty)}\frac{1}{f\circ\gamma}=\infty$ as $\int_0^{\infty}\frac{\D s}{c_1 s+c_2}=\infty$ for all $c_1,c_2>0$.
\end{proof}

Now we are ready to state the main theorem.

\begin{Thm}\label{theorem}
Let $(M,g)$ be a non-compact and complete Riemannian manifold and $A\colon M\to(0,\infty)$ a positive function. We denote a conformally transformed metric on $M$ by $g^\prime=\frac{g}{A^2}$. Then $(M,g^\prime)$ is complete if and only if $\frac{1}{A}\colon M\to(0,\infty)$ is not $L^1$ on any escaping curve w.r.t.~$g$. Moreover, if $A$ grows at most linearly towards $g$-infinity on $M$, then $(M,g^\prime)$ is complete and if $A$ grows superlinearly towards $g$-infinity on $M$, then $g^\prime$ is a bounded metric on $M$, particularly $(M,g^\prime)$ is incomplete. 
\end{Thm}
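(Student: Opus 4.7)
The plan is to treat the three assertions separately, all built on one standard fact. The overarching tool is a consequence of the metric Hopf--Rinow theorem: a connected Riemannian manifold is (metrically) complete if and only if every smooth escaping curve has infinite length. This characterization is tailored to the paper's notion of escaping curve, since that notion is purely topological and in particular is the same whether one measures lengths with $g$ or with $g'=g/A^2$. For any escaping $\alpha\colon[0,b)\to M$, the $g'$-length is
\[
\int_0^b\sqrt{g'_{\alpha(s)}(\dot\alpha(s),\dot\alpha(s))}\,\D s \;=\; \int_0^b\frac{1}{A(\alpha(s))}\sqrt{g_{\alpha(s)}(\dot\alpha(s),\dot\alpha(s))}\,\D s,
\]
which is finite precisely when $1/A$ is $L^1$ on $\alpha$ w.r.t.~$g$ in the sense of Def.~\ref{definition}(iii). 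Feeding this identity into the Hopf--Rinow characterization applied to $g'$ yields the stated equivalence, namely the first claim of the theorem.

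The at-most-linear growth assertion then follows at once: by Lem.~\ref{lem1}, $1/A$ fails to be $L^1$ on any escaping curve w.r.t.~$g$, so by the equivalence $(M,g')$ is complete. For the superlinear case, fix $x_0\in M$ and let $\epsilon,c_1,c_2>0$ be the constants from Def.~\ref{definition}(ii). For arbitrary $x\in M$, completeness of $g$ combined with Hopf--Rinow yields a unit-speed minimizing $g$-geodesic $\gamma\colon[0,L]\to M$ from $x_0$ to $x$, where $L=d_g(x_0,x)$. Along $\gamma$ we have $d_g(x_0,\gamma(s))=s$, so
\[
d_{g'}(x_0,x)\;\leq\;\int_0^L\frac{\D s}{A(\gamma(s))}\;\leq\;\int_0^\infty\frac{\D s}{c_1 s^{1+\epsilon}+c_2}\;=:\;D\;<\;\infty,
\]
uniformly in $x\in M$. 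Hence $(M,g')$ has diameter at most $2D$, i.e., $g'$ is a bounded metric; and since $M$ is noncompact while in any complete connected Riemannian manifold closed bounded sets are compact, $(M,g')$ cannot be complete.

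The main conceptual point that needs to be addressed cleanly is the opening Hopf--Rinow-style characterization linking metric completeness to infinite length of all escaping curves; once this is invoked (or briefly justified for the paper's precise notion of escaping curve), the remainder is essentially a one-line application of Lem.~\ref{lem1} for the linear case and an elementary convergent one-dimensional integral estimate along a minimizing $g$-geodesic for the superlinear case. No further analytic input is required.
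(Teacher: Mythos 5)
Your argument is correct, but it reaches the central equivalence by a genuinely different route than the paper. You black-box the divergent-curve criterion (a Riemannian manifold is complete iff every escaping curve has infinite length), note that ``escaping'' is purely topological and hence identical for $g$ and $g'$, and translate $g'$-length of an escaping curve into the $L^1$ condition on $1/A$; the paper instead proves this criterion in situ: in one direction it shows that a finite-$g'$-length escaping ray traps a sequence with no convergent subsequence inside a closed bounded $g'$-ball, contradicting the Hopf--Rinow compactness characterization, and in the other it takes an inextendible finite-length $g'$-geodesic, argues that it escapes, and reparametrizes it by $g$-arclength (using completeness of $g$) so that it literally matches Def.~\ref{definition}(iii) with domain $[0,\infty)$. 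Your version is shorter and makes transparent that $A$ enters only through lengths of escaping curves, but the two points you defer are precisely the content of the paper's proof: (a) the textbook criterion is usually stated for curves that eventually leave every compact set, whereas the paper's notion only requires a sequence $\alpha(t_n)\to\infty$, so one should add the one-line Hopf--Rinow observation that in a complete manifold an escaping sequence cannot remain in a bounded set, which shows the criterion holds verbatim for this weaker notion; and (b) to land exactly in Def.~\ref{definition}(iii), which asks for a ray on $[0,\infty)$, one needs the $g$-arclength reparametrization that exploits completeness of $g$. For the growth statements you and the paper agree on the at-most-linear case (Lem.~\ref{lem1} plus the equivalence); in the superlinear case your distance bound along minimizing $g$-geodesics is the paper's estimate (made more explicit), and deducing incompleteness from boundedness together with non-compactness is slightly more direct than the paper's separate appeal to Lem.~\ref{lem1}.
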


\begin{proof}
We will show the following statement: $\frac{1}{A}$ is $L^1$ on an escaping curve w.r.t.~$g$ if and only if $(M,g^\prime)$ is incomplete. The first remaining statement then follows easily from Lem.~\ref{lem1}.

Assume first that there is $x_0\in M$ and a ray $\gamma\colon[0,\infty)\to M$ with $\gamma(0)=x_0$ escaping to infinity, such that $\frac{1}{A}$ is $L^1$ on $\gamma$ w.r.t.~$g$. We can parametrize $\gamma$ by $g$-arc length, i.e., we have $g(\dot{\gamma},\dot{\gamma})=1$, and hence some constant $B<\infty$ such that $\int_0^\infty\frac{\D s}{(A\circ\gamma)(s)}=B$. Take any sequence $\{s_n\}_{n\in\N}\subset[0,\infty)$, with $s_n\to\infty$ and $\gamma(s_n)\to\infty$ as $n\to\infty$. Then we compute the distance between $x_0$ and each $\gamma(s_n)$ in the conformally transformed metric $g^\prime$:
\[
 d_{g^\prime}(x_0,\gamma(s_n))\leq\int_0^{s_n}\frac{\sqrt{g(\dot{\gamma}(s),\dot{\gamma}(s))}}{A(\gamma(s))}\D s\leq\int_0^{\infty}\frac{1}{(A\circ\gamma)(s)}\D s=B. 
\]
Hence, the sequence $\{\gamma(s_n)\}_{n\in\N}$ is contained in a closed and bounded $g^\prime$-ball of radius $B$ about $x_0$. But by the definition of escaping curves the sequence $\{\gamma(s_n)\}_{n\in\N}$ has no convergent subsequence, thus the closed and bounded $g^\prime$-ball of radius $B$ about $x_0$ is not compact and, therefore, $(M,g^\prime)$ is incomplete by the Hopf--Rinow theorem.

Assume now that $(M,g^\prime)$ is incomplete, hence there is a point $x_0\in M$ and a $g^\prime$-geodesic ray $\gamma\colon[0,b)\to M$ emanating from $x_0$ that is not extendible to the parameter value $b$. As a $g^\prime$-geodesic is parametrized to unit $g^\prime$-velocity, we conclude the length of $\gamma$ to be $b<\infty$. But obviously $\gamma$ escapes to infinity, as there exists no point $\gamma(b)\in M$. Now reparametrize $\gamma$ to unit $g$-velocity, i.e., $g(\dot{\gamma},\dot{\gamma})=1$, then we get $\gamma\colon[0,\infty)\to M$ as $g$ is assumed complete. We compute
\[
 \infty>b=\int_0^\infty\frac{\sqrt{g(\dot{\gamma}(s),\dot{\gamma}(s))}}{A(\gamma(s))}\D s=\int_0^{\infty}\frac{1}{(A\circ\gamma)(s)}\D s.
\]
Hence, $\frac{1}{A}$ is $L^1$ on the escaping curve $\gamma$ w.r.t.~$g$.   

As $\frac{1}{A}$ is $L^1$ on all escaping $g$-geodesic rays if $A$ grows superlinearly towards $g$-infinity on $M$ by Lem.~\ref{lem1}, we certainly have in this case that $g^\prime$ is incomplete. Furthermore, computing the distance of a fixed $x_0\in M$ to any $x\in M$ in the $g^\prime$ distance we get for some finite constant $r(\epsilon,c_1,c_2)<\infty$
\[
 d_{g^\prime}(x_0,x)\leq\int_0^\infty\frac{\D s}{c_1s^{1+\epsilon}+c_2}=r(\epsilon,c_1,c_2).
\]
Hence, $g^\prime$ is bounded as now $d_{g^\prime}(x,y)\leq d_{g^\prime}(x_0,x)+d_{g^\prime}(x_0,y)=2r$ holds for all $x,y\in M$.  
\end{proof}

As a consequence of theorem above, we can now establish conditions for the completeness of a Riemannian metric $g=h-s$, emerging from a complete Riemannian metric $h$ and a non-negative, symmetric $(0,2)$-tensor field $s$ (i.e., $s_x(v,v)\geq0$ for all $x\in M$ and all $v\in T_xM$) on a manifold $M$. Obviously, $g$ is a non-degenerate Riemannian metric if $\frac{s_x(v,v)}{h_x(v,v)}<1$ for all $x\in M$ and all $v\in T_xM\setminus\{0\}$. We can now define an $h$-norm for $(0,2)$-tensor fields on $M$. For the tensor field $s$, this norm is given at some point $x\in M$ by
\[
 \interleave s\interleave^h_x=\sup_{v\in T_xM\setminus\{0\}}\sqrt{\frac{s_x(v,v)}{h_x(v,v)}}.
\]
Then $g$ is complete if $\sup_{x\in M}\interleave s\interleave^h_x<1$, because in this case $h\leq g$. But if $\sup_{x\in M}\interleave s\interleave^h_x=1$, the metric $g$ can be complete anyway if $\interleave s\interleave^h_x$ obeys certain growth conditions, which can be inferred from Thm.~\ref{theorem}. 

\begin{corollary}\label{cor1}
Let $(M,h)$ be a complete Riemannian manifold and $s$ a non-negative, symmetric $(0,2)$-tensor field $M$. Let the symmetric $(0,2)$-tensor field $g$, given by $g=h-s$, be a Riemannian metric for all $x\in M$. Assume $\sup_{x\in M}\interleave s\interleave^h_x=1$, then $g$ is complete if the function
\[
 \sqrt{1-\left(\interleave s\interleave^h_x\right)^2}
\]
is not $L^1$ on any escaping curve w.r.t.~$h$ on $M$, and particularly if for all fixed $x_0\in M$ there are constants $c_1,c_2>0$, such that
\[
 \left(\interleave s\interleave^h_x\right)^2\leq1-\frac{1}{(c_1d_h(x,x_0)+c_2)^2}
\]
holds for all $x\in M$.
\end{corollary}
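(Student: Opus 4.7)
The plan is to reduce the corollary to Theorem~\ref{theorem} by constructing a conformal factor $A$ on $(M,h)$ such that the conformally transformed metric $h/A^2$ is dominated by $g$ in the partial order $\leq$ from the introduction. Since $g$ is complete whenever there is a smaller complete metric on $M$, it then suffices to show that $(M,h/A^2)$ is complete.

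The natural choice is $A(x)=\bigl(1-(\interleave s\interleave^h_x)^2\bigr)^{-1/2}$. This is smooth (assuming $\interleave s\interleave^h_x$ is smooth enough; otherwise measurability suffices for the length comparison) and finite at every point, because the hypothesis that $g=h-s$ is a Riemannian metric forces $\interleave s\interleave^h_x<1$ pointwise, even though $\sup_{x\in M}\interleave s\interleave^h_x=1$. The central pointwise estimate then reads: for all $x\in M$ and $v\in T_xM$,
\[
 \frac{h_x(v,v)}{A(x)^2}=\bigl(1-(\interleave s\interleave^h_x)^2\bigr)\,h_x(v,v)\leq h_x(v,v)-s_x(v,v)=g_x(v,v),
\]
which follows directly from the defining inequality $s_x(v,v)\leq(\interleave s\interleave^h_x)^2\,h_x(v,v)$ of the norm $\interleave\cdot\interleave^h_x$. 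Hence $h/A^2\leq g$.

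With this comparison in hand, both claims of the corollary follow by substitution into Theorem~\ref{theorem} applied to $A$ on $(M,h)$. For the general statement, $1/A(x)=\sqrt{1-(\interleave s\interleave^h_x)^2}$, so the hypothesis that this function is not $L^1$ on any escaping curve w.r.t.~$h$ is exactly the criterion of Theorem~\ref{theorem} for $(M,h/A^2)$ to be complete; the comparison $h/A^2\leq g$ then transfers completeness to $g$. For the explicit growth condition, the bound
\[
 (\interleave s\interleave^h_x)^2\leq 1-\frac{1}{(c_1 d_h(x,x_0)+c_2)^2}
\]
is equivalent to $A(x)\leq c_1 d_h(x,x_0)+c_2$, i.e., $A$ grows at most linearly towards $h$-infinity on $M$, so the second part of Theorem~\ref{theorem} again yields completeness of $(M,h/A^2)$, and hence of $(M,g)$.

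The only genuine step is the pointwise comparison $h/A^2\leq g$, which is a direct consequence of the definition of the operator norm $\interleave\cdot\interleave^h_x$; everything else is bookkeeping that reduces the problem to Theorem~\ref{theorem}.
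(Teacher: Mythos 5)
Your proposal is correct and follows essentially the same route as the paper: the conformal factor $A=\bigl(1-(\interleave s\interleave^h)^2\bigr)^{-1/2}$ you introduce is exactly the metric $h^{\prime}=h/A^2$ the paper compares pointwise with $g$, after which both arguments invoke Thm.~\ref{theorem} for the $L^1$ criterion and the linear-growth case. Your additional remarks (pointwise finiteness of $A$ via compactness of the unit sphere, and the equivalence of the stated bound with at most linear growth of $A$) are accurate and only make explicit what the paper leaves implicit.
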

 
\begin{proof}
For all $x\in M$ and $v\in T_xM$, we compute
\[
 g_x(v,v)=h_x(v,v)(1-\frac{s_x(v,v)}{h_x(v,v)})\geq h_x(v,v)(1-(\interleave s\interleave^h_x)^2)=\frac{h_x(v,v)}{\frac{1}{1-(\interleave s\interleave^h_x)^2}}=:h^{\prime}_x(v,v).
\]
So $g$ is complete if $h^{\prime}$ is complete, and by Thm.~\ref{theorem}, for complete $h$, the metric $h^{\prime}$ is complete if and only if $\sqrt{1-\left(\interleave s\interleave^h_x\right)^2}$ is not $L^1$ on any escaping curve w.r.t.~$h$ on $M$.

Furthermore, $h^\prime$ is also complete if $\frac{1}{\sqrt{1-\left(\interleave s\interleave^h_x\right)^2}}$ grows at most linearly towards $h$-infinity on $M$. By item (i) in Def.~\ref{definition} this means that for all fixed $x_0\in M$ there are $c_1,c_2>0$, such that
\[
 \frac{1}{\sqrt{1-\left(\interleave s\interleave^h_x\right)^2}}\leq c_1 d_h(x_0,x) + c_2
\]
holds for all $x\in M$ and the result follows. 
\end{proof}

\begin{remark}
A special case of the Corollary is at hand if the tensor field $s$ is given by $s=\beta\otimes\beta$, with $\beta$ being a one-form on the manifold $M$. In this case the tensor norm $\interleave\cdot\interleave$ can be replaced by the usual norm for one-forms given by
\[
 \|\beta\|^h_x=\sup_{v\in T_xM\setminus0}\sqrt{\frac{(\beta_x(v))^2}{h_x(v,v)}},
\] 
and the corollary holds in an analogue version.
\end{remark}

\begin{example}
For an instructive example we can look at the flat metric
\[
 \delta=dr^2+r^2d\Omega^2
\]
on the punctured Euclidian space $\R^3\setminus\{0\}$, with $d\Omega^2=d\theta^2+\sin^2\theta d\varphi^2$ being the usual metric on the two-sphere $S^2$. This metric is incomplete because for any fixed angle $\Omega_0$ we can find a radial line segment $x(t)=(-t,\Omega_0)$ with $t\in[-1,0)$, approaching the removed origin from the unit sphere. For the flat metric $\delta$ this is a geodesic arc, not extendible to $t=0$. So obviously with $\dot{x}=(-1,0)$ we have
\[
 d_{\delta}\left((1,\Omega_0),(0+\epsilon,\Omega_0)\right)=\int_{-1}^{0-\epsilon}\sqrt{\delta(\dot{x},\dot{x})}dt=
 \int_{-1}^{0-\epsilon}dt=1-\epsilon<\infty.
\]
If we now consider the conformally transformed metric 
\[
 \tilde{\delta}=\frac{\delta}{r^2}=\frac{dr^2}{r^2}+d\Omega^2,
\]
we observe that this metric is complete on $\R^3\setminus\{0\}$. This is obvious by choosing a new radial coordinate $\rho=\ln r$. For $r\in(0,\infty)$ and $r=1$, we now have $\rho\in(-\infty,\infty)$ and $\rho=0$. Thus the curve $y(t)=(-t,\Omega_0)$ with $t\in[0,\infty)$ and fixed angle $\Omega_0$ in the new coordinates is a geodesic arc for $\tilde{\delta}$. This geodesic approaches negative $\rho$-infinity---which corresponds to $r=0$ in the old coordinates---for the curve parameter $t\to\infty$. Thus one could say that the conformal transformation moved the origin to infinite distance. Clearly, we have for $\dot{y}=(-1,0)$
 \[
 d_{\tilde{\delta}}\left((0,\Omega_0),(\infty,\Omega_0)\right)=\int_{0}^{\infty}\sqrt{\tilde{\delta}(\dot{y},\dot{y})}dt=
 \int_{0}^{\infty}dt=\infty
\]
so that $\tilde{\delta}$ is complete. Moreover, one can infer from these considerations, that $(\R^3\setminus\{0\},\delta)$ is conformally equivalent to $(\R\times S^2,\tilde{\delta})$. If we now impose a conformal transformation 
\[
 \tilde{\tilde{\delta}}=\frac{\tilde{\delta}}{(A(\rho))^2}
\]
depending on the radial coordinate $\rho$ (resp.~$r$) only, we have for the radial distance with fixed angle $\Omega_0$
\[
 d_{\tilde{\delta}}(\rho_0,\rho)=|\rho-\rho_0|.
\]
So setting $\rho_0=0$ (resp.~$r=1$), we get from Thm.~\ref{theorem} that provided
\[
 A(\rho)\leq c_1|\rho|+c_2
\]
holds, $A$ grows at most linearly towards infinity and $\tilde{\tilde{\delta}}$ is complete. In the coordinate $r$, this means that a conformal factor $A(r)$ imposed on $\tilde{\delta}$ may at most grow by
\[
 A(r)\leq c_1|\ln r|+c_2
\]
towards the removed origin $r=0$ and the metric
\[
 \tilde{\delta}^{\prime}=\frac{\frac{dr^2}{r^2}+d\Omega^2}{(c_1|\ln r|+c_2)^2}
\]
is complete. As an example for Cor.~\ref{cor1} we consider the metric
\[
 h=\tilde{\delta}-\beta^2=\left(1-\frac{\rho^2}{\rho^2+1}\right)d\rho^2+d\Omega^2 
\]
on $\R\times S^2$, with $\beta=\frac{\rho}{\sqrt{\rho^2+1}}d\rho$. Now for all $x_0=(\rho_0,\Omega_0)\in\R^3\setminus\{0\}$ we choose $c_1=1$ and $c_2=\sqrt{\rho_0^2+1}$ in the corollary above. If now
\[
 (\|\beta\|^{\tilde{\delta}}_{\rho})^2=\frac{\rho^2}{\rho^2+1}\leq1-\frac{1}{(d_{\tilde{\delta}}(\rho_0,\rho)+\sqrt{\rho_0^2+1})^2}
\]
holds for all $\rho,\rho_0\in\R$, the metric $h$ is complete. The inequality above can be straightforwardly transformed to the equivalent inequality
\[
 |\rho-\rho_0|\sqrt{\rho_0^2+1}\geq\rho_0(\rho-\rho_0),
\]
by using $d_{\tilde{\delta}}(\rho_0,\rho)=|\rho-\rho_0|$. This inequality certainly holds for all $\rho,\rho_0\in\R$. 
\end{example}

\end{document}